\documentclass[12pt,reqno]{amsart}
\usepackage{amssymb,amsthm,latexsym, amsmath, url}
\usepackage{color}
\usepackage{fullpage}
\usepackage{hyperref}

% TITLE AND AUTHOR INFORMATION --------------------------------

\title[Positive Solns to Singular 2nd Order BVPs]{Existence Result for Singular Second Order Dynamic Equations with Mixed Boundary Conditions}
\author{Shalmali Bandyopadhyay, Curtis Kunkel}
\address{Shalmali Bandyopadhyay, Curtis Kunkel, University of Tennessee at Martin, Martin, TN 38238, USA}

% ENVIRONMENTS ------------------------------------------------

\newtheorem{theorem}{Theorem}[section]

\newtheorem{definition}[theorem]{Definition}

% -------------------------------------------------------------

%%%%%%%%%%%%%%%%%%%%%%%%%%%%%%%%%%%%%%%%%%%%%%%%%%%%%%%%%%%%%%%%%%%%%%%%%%%%%%%%%%%
\begin{document}

\begin{abstract} We explore singular second-order boundary value problems with mixed boundary conditions on a general time scale. Using the lower and upper solutions method combined with the Brouwer fixed point theorem we demonstrate the existence of a positive solution and obtain the desired solution by using a sequence of solutions to a sequence of nonsingular second-order equations and passing to the limit.  \\
\textbf{  Keywords:} singular boundary value problems; time scales; mixed conditions; lower and upper solutions; Brouwer fixed point theorem; approximate regular problems \\
\textbf{  AMS Subject Classification:} 34B16, 34B18, 34B40, 39A10  
\end{abstract}
\maketitle
%%%%%%%%%%%%%%%%%%%%%%%%%%%%%%%%%%%%%%%%%%%%%%%%%%%%%%%%%%%%%%%%%%%%%%%%%%%%%%%%%%%
\section{Introduction}
Study of dynamic equations aka differential equations on time-scale is fairly new, established in 1988, as the doctoral dissertation of Stephen Hilger (founder of time-scale calculus) and has a lot of potential theoretical exploration in recent future. Basically, time-scale bridges the gap between the continuous and the discrete mathematics, which intimidated several applied mathematicians to dive into the research of time-scale.  There are several applications of dynamic equations which includes the study of population models and life cycles of Cicadas (which are only prime numbers, interestingly), epidemic models, heat transfer, study of applications of pesticides while treating mosquitoes (see \cite{thomas2009}), study of control systems in which signals are transmitted over an asynchronous network (see \cite{kozyakin2004}, \cite{siegmund2021}), neural networks and so on.\\
\par In General, Singular boundary value problems have also garnered significant attention.  Representative works include \cite{agarwal1999}, \cite{precup2016}, and \cite{rachunkova2009}.
This paper investigates the following problem
\begin{equation}
\begin{cases}
u^{\Delta\Delta}(\rho(t)) + f(t, u(t)) = 0, \quad t \in \mathbb{T}^0,\\
u^{\Delta}(0) = 0, \\
u(T) = g(T).
\end{cases}
\end{equation}
which furthers the work done previously on singular second order boundary value problems on general time scale done by Kunkel in \cite{kunkel2019} , where the author studied the following mixed boundary value problem
\begin{equation}
\begin{cases}
u^{\Delta\Delta}(\rho(t)) + f(t, u(t)) = 0, \quad t \in \mathbb{T}^0,\\
u^{\Delta}(0) = 0, \\
u(T) = 0.
\end{cases}
\end{equation}

%a second order singular boundary value problem for differential equations and a second order singular boundary value problem in purely discrete time scales of nonuniform step size, respectively.  
While similar across most of the time scale, this result stands out because the time scale is completely general. It can be entirely continuous (as in \cite{kunkel2006}), entirely discrete (as in \cite{kunkel2008}), or any combination in between, as long as the underlying interval remains a closed subset of the real numbers.

So far, lower and upper solutions have been used extensively in establishing solutions of boundary value problems for finite difference equations (e.g. see\cite{bao2012}, \cite{henderson2006}, and \cite{precup2016}). The techniques presented in this paper are based primarily on lower and upper solution methods where we employ the Brouwer fixed point theorem \cite{zeidler1986} to achieve the existence of solution for regular problem (will be defined later); the problem without the singularity which will be applied to nonsingular perturbations of our nonlinear problem. This process will eventually lead to the boundary value problem by taking the limit. We will provide definitions of appropriate lower and upper solutions.

%%%%%%%%%%%%%%%%%%%%%%%%%%%%%%%%%%%%%%%%%%%%%%%%%%%%%%%%%%%%%%%%%%%%%%%%%%%%%%%%%%%
\section{Preliminaries}

In this section, we state some definitions used throughout the remainder of the paper, many of which can be found in \cite{bohner2001}.  Some definitions are required prior to the introduction of the problem we intend to solve.

%% Definition : Time Scale
\begin{definition}\label{timescale}
	A time scale is any arbitrary nonempty closed subset of the reals.
\end{definition}

For our purposes of considering a boundary value problem with boundary conditions occurring at both the lower and upper extreme values of $t$ in $\mathbb{T},$ we will specify our time scale $\mathbb{T}$ as having a minimum value of $0$ and a maximum value of $T.$  Thus, our time scale will by default be nonempty, and we further specify our time scale $\mathbb{T}$ to be any arbitrary closed subset of $[0,T].$

Bohner and Peterson standardized the notation for time scales in their text \cite{bohner2001}, and we include some of the more necessary definitions below.

%% Definition : Forward & Backward Step Operators
\begin{definition}
	Let $\mathbb{T}$ be a time scale.  Let $t \in \mathbb{T}.$ \\
	We define the forward jump operator $\sigma : \mathbb{T} \rightarrow \mathbb{T}$ by $$\sigma(t) := \inf\{s \in \mathbb{T} : s > t\}.$$
    We define the backward jump operator $\rho : \mathbb{T} \rightarrow \mathbb{T}$ by $$\rho(t) := \sup\{s \in \mathbb{T} : s < t\}.$$
    In this definition, we specify $\inf \emptyset = \sup \mathbb{T} = T$ and $\sup \emptyset = \inf \mathbb{T} = 0.$
\end{definition}

%% Definition : T^k and T^0
\begin{definition}
	For the purpose of defining differentiation, we need to specify the time scale $$\mathbb{T}^k = \mathbb{T} - \{ T \}.$$
	For the purpose of defining our boundary value problem, we need to specify the time scale $$\mathbb{T}^0 = \mathbb{T} - \{ 0,T \}.$$
\end{definition}

%% Definition : Delta Derivative
\begin{definition}
	Assume $f: \mathbb{T} \rightarrow \mathbb{R}$ is a function and let $t \in \mathbb{T}^k.$  Then we define $f^{\Delta}(t)$ to be the number (provided it exists) with the property that given any $\varepsilon > 0,$ there is a neighborhood $U$ of $t$ such that $$\left|[f(\sigma(t)) - f(s)] - f^{\Delta}(t)[\sigma(t) - s]\right| \leq \varepsilon |\sigma(t) - s|,$$ for all $s \in U.$  We call $f^{\Delta}(t)$ the delta (or Hilger) derivative of $f$ at $t.$  We also make note that $f^{\Delta \Delta}(t) = (f^{\Delta})^{\Delta}(t).$
\end{definition}

Having introduced these definitions, we can now introduce our mixed boundary value problem with boundary conditions,  which will be our focus throughout this paper, 

%% The BVP
\begin{equation}\label{eq}
\begin{cases}
u^{\Delta\Delta}(\rho(t)) + f(t, u(t)) = 0, \quad t \in \mathbb{T}^0,\\
u^{\Delta}(0) = 0, \\
u(T) = g(T).
\end{cases}
\end{equation}

Our goal is to prove the existence of a positive solution to this problem \eqref{eq} under certain assumptions concerning the function $f$ as explained below.  Prior to formally defining these assumptions, we first need a few more definitions.

%% Definition : Solution & Positive Solution
\begin{definition}
	Define a solution to problem \eqref{eq}, to mean a function $u : \mathbb{T} \rightarrow \mathbb{R}$ such that $u$ satisfies \eqref{eq} on $\mathbb{T}^0$ and also satisfies the boundary conditions.  If $u(t) > 0$ for $t \in \mathbb{T},$ except possibly at the boundary conditions, we call $u$ a positive solution to problem \eqref{eq}.
\end{definition}

%% Definition : Continuous
\begin{definition}
	Let $D \subseteq \mathbb{R}.$  We say $f$ is continuous on $\mathbb{T} \times D$ if $f(\cdot, x)$ is defined on $\mathbb{T}$ for each $x \in D$ and if $f(t, \cdot)$ is continuous on $D$ for each $t \in \mathbb{T}.$
\end{definition}

%% Definition : Regular & Singular Problems
\begin{definition}
	Let $D \subseteq \mathbb{R},$ where $f:\mathbb{T} \times D \rightarrow \mathbb{R}.$ \\ If $D = \mathbb{R},$ then we call \eqref{eq} a regular problem. \\ If $D \subsetneq \mathbb{R}$ and $f$ has singularities on the boundary of $D,$ then we call \eqref{eq}a singular problem.
\end{definition}

%% Assumptions (A) - (C)
We now have sufficient definitions to introduce assumptions (\textbf{A})-(\textbf{C}) that will be used throughout this paper:

\begin{description}
	\item[A] {$D = [0, \infty).$}
	\item[B] {$f$ is continuous on $\mathbb{T} \times D.$}
	\item[C] {$f(t, x)$ has a singularity at $x=0,$ i.e. $\limsup\limits_{x \rightarrow 0^{+}} |f(t, x)| = \infty,$ for $t \in \mathbb{T}.$}
\end{description}

We now can specify that our problem \eqref{eq}is a singular boundary value problem defined on our general time scale $\mathbb{T}.$  Before we can begin discussing our solution technique, however, we need to mention what we mean by integration over a general time scale $\mathbb{T}.$  

%% Definition : rd-continuous
\begin{definition}
	A function $f:\mathbb{T} \rightarrow \mathbb{R}$ is called rd-continuous provided it is continuous at right-dense points in $\mathbb{T}$ and its left-sided limits exist (finite) at left-dense points in $\mathbb{T}.$  The set of rd-continuous functions $f:\mathbb{T} \rightarrow \mathbb{R}$ will be denoted by $C_{rd}.$
\end{definition}

\begin{theorem}
	Assume $f:\mathbb{T} \rightarrow \mathbb{R}.$  If $f$ is continuous, then $f$ is rd-continuous.
\end{theorem}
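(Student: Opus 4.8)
The plan is to verify the two defining clauses of rd-continuity directly against the hypothesis of continuity. Recall that $f \in C_{rd}$ requires both that (i) $f$ be continuous at every right-dense point of $\mathbb{T}$, and (ii) the left-sided limit of $f$ exist and be finite at every left-dense point of $\mathbb{T}$. Clause (i) is immediate: if $f$ is continuous on all of $\mathbb{T}$, then in particular it is continuous at each right-dense point, since the right-dense points form a subset of $\mathbb{T}$ and nothing beyond plain continuity at the point is being asked.

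For clause (ii), I would fix an arbitrary left-dense point $t \in \mathbb{T}$, that is, a point with $\rho(t) = t$ and $t > 0$, so that $t$ is approached from the left by points of $\mathbb{T}$. I would then invoke continuity of $f$ at $t$: given $\varepsilon > 0$ there is a neighborhood $U$ of $t$ with $|f(s) - f(t)| \le \varepsilon$ for all $s \in U \cap \mathbb{T}$. Restricting attention to those $s \in U \cap \mathbb{T}$ with $s < t$ shows that $\lim_{s \to t^-} f(s)$ exists and equals $f(t)$, which is a real number and therefore finite. This is exactly clause (ii).

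The only point needing a moment's care, and it is a minor one, is that at a left-scattered point the clause concerning left-sided limits is vacuously satisfied, since no sequence in $\mathbb{T}$ converges to such a point from below; hence no condition is imposed there and continuity trivially suffices. Assembling the two clauses gives $f \in C_{rd}$, which completes the argument. I do not expect any genuine obstacle here; the proof is essentially a matter of matching the hypothesis carefully against the definition of $C_{rd}$.
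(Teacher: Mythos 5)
Your argument is correct: checking the two clauses of the definition of $C_{rd}$ directly — continuity at right-dense points is immediate, and at a left-dense point continuity of $f$ at $t$ forces $\lim_{s \to t^{-}} f(s) = f(t)$, which is finite — is exactly the standard proof of this fact. The paper itself states this theorem without proof, quoting it from Bohner and Peterson's text as background on time-scale integration, so there is nothing in the paper to diverge from; your write-up (including the harmless remark that left-scattered points impose no condition) is the expected argument.
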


\begin{theorem}
	Let $a,b \in \mathbb{T}$ and $f \in C_{rd}.$
		\begin{enumerate}
		\item If $\mathbb{T} = \mathbb{R},$ then $$\int_{a}^{b} f(t) \Delta t = \int_{a}^{b} f(t) dt,$$ where the integral on the right is the usual Riemann integral from calculus.

		\item If $[a,b]$ consists only of isolated points, then
		\begin{equation*}
			\int_{a}^{b} f(t) \Delta t = \left\{
			\begin{array}{ll}
				\sum_{t\in[a,b)} \mu(t)f(t) & if \quad a < b \\
				0 & if \quad a = b \\
				- \sum_{t\in[b,a)} \mu(t)f(t) & if \quad a > b.
			\end{array}
			\right.
		\end{equation*}

		\item If $\mathbb{T} = \mathbb{Z},$ then 
		\begin{equation*}
			\int_{a}^{b} f(t) \Delta t = \left\{
			\begin{array}{ll}
				\sum_{t=a}^{b-1} f(t) & if \quad a < b \\
				0 & if \quad a = b \\
				- \sum_{t=b}^{a-1} f(t) & if \quad a > b.
			\end{array}
			\right.
		\end{equation*}
	\end{enumerate}
\end{theorem}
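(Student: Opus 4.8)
The plan is to derive all three items from the single structural fact underlying the delta integral: for $f \in C_{rd}$ there exists an \emph{antiderivative} $F$, i.e.\ a function with $F^{\Delta}(t) = f(t)$ for all $t \in \mathbb{T}^k$, and the Cauchy (delta) integral is then \emph{defined} by $\int_{a}^{b} f(t)\,\Delta t = F(b) - F(a)$ for $a,b \in \mathbb{T}$; this existence statement, together with the fact that any two antiderivatives differ by a constant, is standard and can be quoted from \cite{bohner2001}. It also yields at once the conventions $\int_{a}^{a} f(t)\,\Delta t = 0$ and $\int_{a}^{b} f(t)\,\Delta t = -\int_{b}^{a} f(t)\,\Delta t$, which dispose of the case $a = b$ and reduce the case $a > b$ of items (2) and (3) to the case $a < b$. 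So in each item it then remains only to evaluate $F(b) - F(a)$ using the structure of the particular time scale.

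For item (1), with $\mathbb{T} = \mathbb{R}$ every point is right-dense, so $\sigma(t) = t$, and substituting $\sigma(t) = t$ into the $\varepsilon$-neighborhood definition of the Hilger derivative shows that $F^{\Delta}(t)$ is exactly the ordinary two-sided derivative $F'(t)$. Since an rd-continuous function on $\mathbb{R}$ is continuous, it has a $C^{1}$ antiderivative, and the classical Fundamental Theorem of Calculus gives $\int_{a}^{b} f(t)\,dt = F(b) - F(a) = \int_{a}^{b} f(t)\,\Delta t$.

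For item (2), assume $a < b$ and that $[a,b] \cap \mathbb{T}$ consists only of isolated points; being closed and bounded in $\mathbb{R}$ it is compact, and a compact set all of whose points are isolated is finite, so we may list it as $a = t_{0} < t_{1} < \cdots < t_{n} = b$. Each $t_{i}$ with $i < n$ is right-scattered with $\sigma(t_{i}) = t_{i+1}$ and $\mu(t_{i}) = t_{i+1} - t_{i} > 0$, and for such a point the definition of $f^{\Delta}$ forces $F^{\Delta}(t_{i}) = \frac{F(\sigma(t_{i})) - F(t_{i})}{\mu(t_{i})}$, i.e.\ $F(t_{i+1}) - F(t_{i}) = \mu(t_{i}) f(t_{i})$. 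Summing this telescoping identity over $i = 0, \dots, n-1$ gives $F(b) - F(a) = \sum_{i=0}^{n-1} \mu(t_{i}) f(t_{i}) = \sum_{t \in [a,b)} \mu(t) f(t)$, which is the asserted formula; the $a = b$ and $a > b$ subcases follow from the conventions noted in the first paragraph. Item (3) is then the specialization of item (2) to $\mathbb{T} = \mathbb{Z}$, where every point is isolated with $\sigma(t) = t+1$ and $\mu(t) \equiv 1$, so that $\sum_{t \in [a,b)} \mu(t) f(t) = \sum_{t=a}^{b-1} f(t)$.

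The only genuine input is the existence of an antiderivative for every rd-continuous function, which I would cite rather than reprove, since building it from scratch via the usual covering/induction arguments on time scales would be a detour from the paper. The remaining points needing a little care are purely bookkeeping: that a bounded, relatively closed set of isolated points really is finite (so the telescoping sum in item (2) is legitimate), that the points $t_{i}$ with $i < n$ all lie in $\mathbb{T}^{k}$ so that $F^{\Delta}(t_{i})$ is defined there, and that the endpoint conventions and the $a \geq b$ subcases are handled so the index ranges and signs match the statement exactly.
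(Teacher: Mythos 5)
Your proof is correct. Note, though, that the paper does not prove this theorem at all: it is quoted as background material from Bohner and Peterson \cite{bohner2001}, and your argument (antiderivative/Cauchy-integral definition, $F^{\Delta}=F'$ on $\mathbb{R}$ plus the Fundamental Theorem of Calculus, and the telescoping identity $F(\sigma(t))-F(t)=\mu(t)f(t)$ at right-scattered points, with $\mathbb{T}=\mathbb{Z}$ as the special case $\mu\equiv 1$) is essentially the standard proof found in that reference, so there is nothing to reconcile with the paper itself.
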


The previous definition and theorems are included from \cite{bohner2001} to allow the reader a better understanding of integration on a time scale.  In the next section, we relax the singular nature of \eqref{eq} and create a lower and upper solutions method for this similar regular problem.  In the subsequent section, we use this result to a perturbation of \eqref{eq} so that a sequence of regular problems are created.  Each of these having their own solution, we pass to a limit of this sequence, yielding the desired result of the singular problem, which we will show also satisfies the positivity condition.

%%%%%%%%%%%%%%%%%%%%%%%%%%%%%%%%%%%%%%%%%%%%%%%%%%%%%%%%%%%%%%%%%%%%%%%%%%%%%%%%%%%
\section{Lower and upper solutions Method}

We first consider the following regular problem to establish a sub and upper solution method. Thereafter, we implement this method to solve the singular problem. 

%% Regular Equation
\begin{equation}\label{eq_reg}
\begin{cases}
u^{\Delta\Delta}(u(t)) + h(t, u(t)) = 0, \quad t \in \mathbb{T}^0\\
u^{\Delta}(0)=0\\
u(T)=g(T)
\end{cases}
\end{equation}
where $h$ is continuous on $\mathbb{T} \times \mathbb{R}$ and $g$ is Lipschitz continuous. Clearly, \eqref{eq_reg} is a regular problem and it is our current goal to establish a sub and upper solution method in order to obtain an existence result. To begin with, we first define the lower solution and upper solution to \eqref{eq_reg}.

%% Definition : Lower Solution
\begin{definition}
	Let $\alpha : \mathbb{T} \rightarrow \mathbb{R}.$  We call $\alpha$ a lower solution of problem \eqref{eq_reg} if, 
	
	\begin{equation}\label{low_eq}
 \begin{cases}
  \alpha^{\Delta\Delta}(\rho(t)) + h(t, \alpha(t)) \geq 0, \quad t \in \mathbb{T}^0,\\
  \alpha^{\Delta}(0) \geq 0,\\
  \alpha(T) \leq g(T).
 \end{cases}
\end{equation}

\end{definition}
 
%% Definition : Upper Solution
\begin{definition}
	Let $\beta : \mathbb{T} \rightarrow \mathbb{R}.$  We call $\beta$ a upper solution of problem \eqref{eq_reg} if, 
	
	\begin{equation}\label{up_eq}
 \begin{cases}
   \beta^{\Delta\Delta}(\rho(t)) + h(t, \beta(t)) \leq 0, \quad t \in \mathbb{T}^0,\\
   \beta^{\Delta}(0) \leq 0,\\
   \beta(T) \geq g(T)
 \end{cases}
\end{equation}
\end{definition}

%% Theorem 1 : Lower and Upper Solutions Method for Regular Problems
\begin{theorem}\label{thm_reg}
	Let $\alpha$ and $\beta$ be lower and upper solutions of the regular problem \eqref{eq_reg} respectively, such that, $\alpha \leq \beta$ on $\mathbb{T}.$  Let $h(t, x, y)$ be continuous on $\mathbb{T} \times \mathbb{R}^2$ and non-increasing in its $y$-variable.  Then \eqref{eq_reg} has a solution $u$ satisfying $$\alpha(t) \leq u(t) \leq \beta(t), \quad t \in \mathbb{T}.$$
\end{theorem}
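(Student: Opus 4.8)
The plan is to reduce the boundary value problem \eqref{eq_reg} to a fixed point problem and invoke the Brouwer fixed point theorem, using the order interval $[\alpha,\beta]$ to produce the a priori bounds that make Brouwer applicable. First I would introduce a modified (truncated) nonlinearity: for $x \in \mathbb{R}$ set $p(t,x) = \max\{\alpha(t), \min\{x, \beta(t)\}\}$, the projection of $x$ onto $[\alpha(t),\beta(t)]$, and replace $h(t,u(t))$ by $h(t, p(t,u(t)))$, possibly adding a penalization term of the form $x - p(t,x)$ to control solutions that try to leave the order interval. Because $\alpha \le \beta$ on $\mathbb{T}$ and $h$ is continuous, the modified right-hand side is continuous and bounded on $\mathbb{T} \times \mathbb{R}$ (bounded since $h$ is continuous on the compact set $\{(t,x): t\in\mathbb{T},\ \alpha(t)\le x\le\beta(t)\}$, using that $\mathbb{T}$ is a closed subset of $[0,T]$ hence compact). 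Similarly $g$ is Lipschitz, hence $g(T)$ is just a constant here, so the boundary data are fixed.

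Next I would convert the modified problem into an integral (summation) operator on a suitable finite-dimensional space. Using the boundary conditions $u^\Delta(0)=0$ and $u(T)=g(T)$, one integrates $u^{\Delta\Delta}(\rho(t)) = -H(t,u(t))$ twice over the time scale: the first integration from $0$ gives $u^\Delta(t) = -\int_0^t H(s,u(s))\,\Delta s$, and the second, anchored at $T$, gives an explicit representation $u(t) = g(T) + \int_t^T \Big(\int_0^s H(\tau,u(\tau))\,\Delta\tau\Big)\Delta s$. This defines an operator $\mathcal{F}$ whose fixed points are exactly the solutions of the modified problem. Since the modified $H$ is bounded, $\mathcal{F}$ maps all of the function space into a fixed bounded set, so $\mathcal{F}$ maps a large closed ball into itself; continuity of $\mathcal{F}$ follows from continuity of $H$ and of the time-scale integral on $C_{rd}$. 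Here it is important that the ambient space is effectively finite-dimensional or that we otherwise have compactness — on a general time scale the cleanest route is to note that the values $u(t)$ for $t\in\mathbb{T}$ are determined by finitely many "degrees of freedom" when $\mathbb{T}$ has isolated points, and in the purely continuous part one gets compactness from an Arzelà–Ascoli argument on the twice-iterated integral; Brouwer (as cited, \cite{zeidler1986}) then yields a fixed point $u$.

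The main obstacle — and the heart of the proof — is showing that any solution $u$ of the modified problem actually satisfies $\alpha \le u \le \beta$ on $\mathbb{T}$, so that the truncation is inactive and $u$ solves the original \eqref{eq_reg}. The argument is a maximum-principle / comparison argument on the time scale: suppose $w := \alpha - u$ attains a positive maximum at some $t^\ast \in \mathbb{T}$. The boundary conditions rule out $t^\ast$ being an endpoint: at $T$ we have $\alpha(T) \le g(T) = u(T)$, and at $0$ the conditions $\alpha^\Delta(0)\ge 0 = u^\Delta(0)$ prevent an interior-type maximum there. At an interior maximizer one gets, after accounting for the shift by $\rho$, that $w^{\Delta\Delta}(\rho(t^\ast)) \le 0$ in the appropriate discrete/continuous sense; but from the defining inequalities and the truncation, $\alpha^{\Delta\Delta}(\rho(t^\ast)) + h(t^\ast,\alpha(t^\ast)) \ge 0$ while $u^{\Delta\Delta}(\rho(t^\ast)) + h(t^\ast, p(t^\ast,u(t^\ast))) = 0$ with $p(t^\ast,u(t^\ast)) = \alpha(t^\ast)$ on the set where $w>0$, so subtracting gives $w^{\Delta\Delta}(\rho(t^\ast)) \ge 0$, and if a penalization term was included it strictly contradicts $w>0$. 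This forces $w \le 0$, i.e. $\alpha \le u$; the inequality $u \le \beta$ is symmetric, using non-increasing monotonicity of $h$ in the truncated variable to get the sign of $h(t,\alpha(t)) - h(t,p(t,u(t)))$ right. Care is needed with the backward-jump shift $\rho(t)$ in the equation versus where the maximum of $w$ occurs, and with degenerate cases where $t^\ast$ is simultaneously left-dense and right-scattered or vice versa; handling those jump cases uniformly across a completely general $\mathbb{T}$ is the delicate bookkeeping the proof must carry out.
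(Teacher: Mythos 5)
Your proposal follows essentially the same route as the paper's proof: truncate $h$ to the order interval $[\alpha(t),\beta(t)]$ with a penalization, rewrite the problem as the fixed point equation $u(t)=g(T)+\int_t^T\int_0^r \tilde h(s,u(s))\,\Delta s\,\Delta r$, apply the cited fixed point theorem on a closed ball, and then run a maximum-principle argument (endpoints excluded by the boundary inequalities, interior maximizer forcing $v^{\Delta\Delta}(\rho(l))\le 0$ against the strict positivity supplied by the penalization). The only adjustment: use a bounded penalization such as the paper's $\frac{x-\beta(t)}{x-\beta(t)+1}$ rather than the unbounded $x-p(t,x)$, since your boundedness claim for the modified nonlinearity (needed to map the ball into itself) fails for the latter, while the bounded version still yields the strict contradiction $v^{\Delta\Delta}(\rho(l))\ge \frac{v(l)}{v(l)+1}>0$ at an interior maximum.
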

\begin{proof}
This proof involves several steps as follows, 
	%We proceed with this proof through a sequence of steps involving modifications of the function $h.$  

  %% STEP 1
    \textbf{Step 1:} For $t \in \mathbb{T}$ and $x \in \mathbb{R},$ let us define  
    
	\begin{equation}\label{tilde}
	\tilde{h}(t, x) = \left\{\begin{array}{ll}
	h(t, \beta(t)) - \frac{x - \beta(t)}{x - \beta(t) + 1}, & x > \beta(t), \\
	h(t, x), & \alpha(t) \leq x \leq \beta(t), \\
	h(t, \alpha(t)) + \frac{\alpha(t) - x}{\alpha(t) - x + 1}, & x < \alpha(t),
	\end{array}\right.
	\end{equation}
$\tilde{h}$ is continuous on $\mathbb{T} \times \mathbb{R}$ by construction and hence bounded. Therefore, there exists $M>0$ so that $$\left|\tilde{h}(t, x)\right| \leq M,$$ for all $t \in \mathbb{T}$ and $x \in \mathbb{R}.$
We now prove existence of solutions to the following auxiliary problem:	
\begin{equation}\label{aux}
\begin{cases}
u^{\Delta\Delta}(\rho(t)) + \tilde{h}(t, u(t)) = 0, \quad t \in \mathbb{T}^0\\
u^{\Delta}(0)=0\\
u(T)=g(T).
\end{cases}
\end{equation}
	
  %% STEP 2
	\textbf{Step 2:} For this existence result, we lay the foundation to use the Brouwer fixed point theorem.  To this end, define $$E = \{u : \mathbb{T} \rightarrow \mathbb{R} | u^{\Delta}(0) = 0;\quad u(T) = g(T)\}.$$
	Also, define $$||u|| = \sup\left\{|u(t)| : t \in \mathbb{T}\right\}.$$	
	
	Given $E$ and $||\cdot||,$ we say $E$ is a Banach space.  Further, we define an operator $\mathcal{T} : E \rightarrow E$ by
	
	%% T Operator
	\begin{equation}\label{T_op}
	\left(\mathcal{T}u\right)(t) = g(T)+\int_{t}^{T} \int_{0}^{r} \tilde{h}(s, u(s)) \Delta s \Delta r.
	\end{equation}
	
By construction, $\mathcal{T}$ is a continuous operator.  Moreover, from the bounds placed on $\tilde{h}$ in \textbf{Step 1} and from \eqref{T_op}, if $r>MT^2,$ then $\mathcal{T}\left(\overline{B(r)}\right) \subseteq \overline{B(r)},$ where $B(r) := \left\{u \in E : ||u|| < r\right\}.$  Hence, by the Brouwer fixed point theorem \cite{zeidler1986}, there exists $u \in \overline{B(r)}$ such that $u = \mathcal{T}u.$
	
  %% STEP 3
    \textbf{Step 3:} The next step is to prove that if $u$ is a fixed point of $\mathcal{T}$ then $u$ is a solution to the problem \eqref{aux} and vice versa.
    \par let us first assume that $u$ solves the problem \eqref{aux}. This immediately implies $u \in E$ and we get,
    \begin{eqnarray*}
    	\int_{0}^{t} u^{\Delta\Delta}(s) \Delta s & = & u^{\Delta}(t) - u^{\Delta}(0) \\
    	& = & u^{\Delta}(t),
    \end{eqnarray*}
    and $$u^{\Delta}(t) = - \int_{0}^{t} \tilde{h}(s, u(s)) \Delta s. $$
    We also have,
    \begin{eqnarray*}
    	\int_{t}^{T} u^{\Delta}(r) \Delta r & = & u(T) - u(t) \\
    	& = & g(T)- u(t).
    \end{eqnarray*}
 Therefore, combining the above two equations, we can conclude that 
    \begin{eqnarray*}
    	u(t) & = & g(T)- \left(- \int_{0}^{r} \tilde{h}(s, u(s)) \Delta s \Delta r\right) \\
    	& = & g(T) + \int_{t}^{T} \int_{0}^{r} \tilde{h}(s, u(s)) \Delta s \Delta r.
    \end{eqnarray*}

	Thus, $u = \mathcal{T}(u).$
	
\par Next we assume that $u$ is a fixed point of $\mathcal{T},$ i.e. $u = \mathcal{T}u.$ Then $u \in E$ and 
\begin{equation*}
    u=T(u)=g(T)+\displaystyle\int_{t}^{T}\int_{0}^{t}\tilde{h}(s,u(s))\Delta s\Delta r
\end{equation*}
Hence, we have, $u(T)=g(T)$ and $u^{\Delta}(t) =  - \int_{0}^{t} \tilde{h}(s, u(s)) \Delta s,$ which implies $u^{\Delta}(0) = 0$ and  $u^{\Delta\Delta}(\rho(t)) = - \tilde{h}(t, u(t)).$  Thus, $u$ is a solution to \eqref{aux}.
		
  %% STEP 4
\textbf{Step 4:} The final step is to show that solutions of \eqref{aux} satisfy $$\alpha(t) \leq u(t) \leq \beta(t), \quad t \in \mathbb{T}.$$
	
	To this end, without loss of generality consider the case of obtaining $u(t) \leq \beta(t),$ and let $v(t) = u(t) - \beta(t).$  For the purpose of establishing a contradiction, assume that $\max\{v(t) | t \in \mathbb{T}\} := v(l) > 0.$
	From \eqref{aux} and \eqref{up_eq}'s boundary conditions,
 \begin{eqnarray*}
   v(T) & = & u(T)-\beta(T)  \\
        & = & g (T) - (g(T)+\epsilon_1), \mbox{ for some } \epsilon_1 >0\\
   & = & \epsilon_1 < 0
 \end{eqnarray*}
 Therefore, by definition of $l$, $l \neq T$. Similarly, 
 \begin{eqnarray*}
     v^{\Delta}(0) & = & u^{\Delta}(0)-\beta^{\Delta}(0)  \\
        & = & -\epsilon_2, \mbox{ for some } \epsilon_2 >0\\
   & = & \epsilon_2 > 0
 \end{eqnarray*}
 Therefore, since the slope at $v(0)$ is positive, $v(0)$ can not be the maximum. Thus, $l \neq 0$. Hence, $l$ must be an interior point in $\mathbb{T},$ i.e. $l \in \mathbb{T}^0.$  Thus, $v^{\Delta}(\rho(l)) \geq 0$ and $v^{\Delta}(l) \leq 0,$ forcing $v^{\Delta \Delta}(\rho(l)) \leq 0.$  
		Therefore,
	\begin{equation}\label{1-4-ineq}
	u^{\Delta\Delta}(\rho(l)) - \beta^{\Delta\Delta}(\rho(l)) \leq 0.
	\end{equation}
	
	On the other hand, we have from \eqref{aux} and \eqref{tilde} that
   	\begin{eqnarray*}
		v^{\Delta \Delta}(\rho(l)) & = & u^{\Delta\Delta}(\rho(l)) - \beta^{\Delta\Delta}(\rho(l)) \\
		& = & -\tilde{h}(l, u(l)) - \beta^{\Delta\Delta}(\rho(l)) \\
		& = & -h(l, \beta(l)) + \frac{u(l) - \beta(l)}{u(l) - \beta(l) + 1} - \beta^{\Delta\Delta}(\rho(l)) \\
    	& \geq & \frac{v(l)}{v(l)+1} \\
    	& > & 0.
	\end{eqnarray*} 
	Hence a contradiction to \eqref{1-4-ineq} and we conclude that $\max\{v(t) | t \in \mathbb{T}\} \leq 0.$  Hence, $v(t) \leq 0$ for all $t \in \mathbb{T},$ or rather $$u(t) \leq \beta(t), \quad \text{for all } t \in \mathbb{T}.$$
	
	A similar argument shows that $\alpha(t) \leq u(t)$ for all $t \in \mathbb{T}.$
	
	Thus, our conclusion holds and the proof is complete.	
\end{proof}

%%%%%%%%%%%%%%%%%%%%%%%%%%%%%%%%%%%%%%%%%%%%%%%%%%%%%%%%%%%%%%%%%%%%%%%%%%%%%%%%%%%
\section{Main Result}
In this section, we make use of Theorem \ref{thm_reg} to obtain positive solutions to the singular problem \eqref{eq}.  In particular, in applying Theorem \ref{thm_reg}, we deal with a sequence of regular perturbations of \eqref{eq}.  Ultimately, we obtain a desired solution by passing to the limit on a sequence of solutions for the perturbations.

\begin{theorem}\label{main}
	Assume conditions \emph{(\textbf{A})}, \emph{(\textbf{B})}, and \emph{(\textbf{C})} hold, along with the following:
	\begin{description}
		\item[D] There exists $c \in (0,\infty)$ so that $f(t,c) \leq 0,$ for all $t \in \mathbb{T}^0.$
		\item[E] There exists $\delta > 0$ so that $f(t,x) > 0$ for all $t \in (T-\delta,T) \cap \mathbb{T}$ and $x \in \left(0, \frac{c}{2}\right).$
		\item[F] $\lim\limits_{x \rightarrow 0^{+}} f(t,x) = \infty$ for $t \in \mathbb{T}.$ 
        \item[G] $0< g(T) <u(0)$ 
	\end{description}
    Then, \eqref{eq} has a solution $u$ satisfying $$0 < u(t) \leq c, \quad t \in \mathbb{T}^k.$$
\end{theorem}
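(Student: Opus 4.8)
The plan is to realize the singular problem \eqref{eq} as a limit of regular problems to which Theorem \ref{thm_reg} applies. First I would construct, for each $n \in \mathbb{N}$ (large), a truncated nonlinearity $f_n(t,x) = f(t, \max\{x, 1/n\})$, which is continuous on $\mathbb{T} \times \mathbb{R}$ and non-increasing in $x$ wherever $f$ is — but since $f$ need not be monotone, I would instead define $f_n$ to also incorporate a monotonization, or more cleanly, work with $h_n(t,x) := f(t, \min\{\max\{x, 1/n\}, c\})$, which is bounded, continuous on $\mathbb{T}\times\mathbb{R}$, and constant (hence non-increasing) outside $[1/n, c]$. The key point is that (\textbf{D}) and (\textbf{E}) should be used to produce a genuine pair of ordered lower/upper solutions for the regular problem $u^{\Delta\Delta}(\rho(t)) + h_n(t,u(t)) = 0$ with the given boundary data: the constant function $\beta \equiv c$ is a natural upper solution candidate since $h_n(t,c) = f(t,c) \le 0$ by (\textbf{D}), $\beta^\Delta(0) = 0 \le 0$, and $\beta(T) = c \ge g(T)$ by (\textbf{G}) (assuming $g(T) \le c$); for the lower solution I would try $\alpha \equiv 1/n$ or a small affine/quadratic bump near $t=T$ built using (\textbf{E}) and (\textbf{F}) so that $\alpha^{\Delta\Delta}(\rho(t)) + h_n(t,\alpha(t)) \ge 0$ holds, $\alpha^\Delta(0) \ge 0$, and $\alpha(T) \le g(T)$.

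Once the lower/upper solution pair is in hand for each $n$, Theorem \ref{thm_reg} yields a solution $u_n$ with $\alpha(t) \le u_n(t) \le c$ on $\mathbb{T}$. The second stage is to extract a limit: the bounds $\|u_n\| \le c$ together with $u_n^\Delta(t) = -\int_0^t h_n(s,u_n(s))\,\Delta s$ and $|h_n| \le M$ (from Step 1 of Theorem \ref{thm_reg}) give uniform bounds on $u_n^\Delta$, hence equicontinuity; on a general time scale one can then pass to a subsequence converging uniformly on $\mathbb{T}$ to some $u$ with $0 \le u(t) \le c$. The integral representation $u_n(t) = g(T) + \int_t^T\int_0^r h_n(s,u_n(s))\,\Delta s\,\Delta r$ passes to the limit once we know $h_n(s,u_n(s)) \to f(s,u(s))$ pointwise, which requires the crucial fact that $u(t) > 0$ for $t$ away from the boundary so that the truncation at $1/n$ eventually becomes inactive.

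The main obstacle — and where assumptions (\textbf{E}) and (\textbf{F}) do their real work — is exactly establishing the strict positivity $u(t) > 0$ on $\mathbb{T}^k$ uniformly enough to kill the truncation in the limit. The strategy here is: since $u_n^{\Delta\Delta}(\rho(t)) = -h_n(t,u_n(t))$ and $u_n^\Delta(0)=0$, concavity-type arguments show $u_n$ is non-increasing (as $h_n \ge 0$ wherever $u_n$ is small, by (\textbf{F})/(\textbf{E})), so $u_n(t) \ge u_n(T) = g(T) > 0$ on a suitable sub-time-scale, or alternatively one derives a lower barrier near $t = T$ from (\textbf{E}) and pushes it backward using the sign of $u_n^{\Delta\Delta}$. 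Care is needed because (\textbf{E}) only gives positivity of $f$ on $(0, c/2)$ near $T$, so one must separately handle the region where $u_n \ge c/2$ (there it is bounded below by $c/2$ trivially) and the region where $u_n < c/2$ near $T$ (there $f > 0$ forces the right concavity/monotonicity). Finally I would verify $u$ satisfies the boundary conditions $u^\Delta(0) = 0$ and $u(T) = g(T)$ by passing to the limit in the corresponding relations for $u_n$, and note the solution is positive on $\mathbb{T}^k$ as required, completing the proof.
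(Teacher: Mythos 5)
Your proposal follows essentially the same route as the paper: regularize $f$ by truncating near $x=0$ (the paper's $f_k$), take constant ordered lower/upper solutions (the paper uses $\alpha\equiv 0$ and $\beta\equiv c$, justified by (\textbf{F}), (\textbf{D}), (\textbf{G})), apply Theorem \ref{thm_reg} to get solutions $u_k$ with $0\le u_k\le c$, establish a uniform positive lower barrier near $t=T$ from (\textbf{E})/(\textbf{F}) so the truncation becomes inactive, and pass to the limit in the integral representation. The differences are minor (lower solution $1/n$ versus $0$, an explicit Arzel\`a--Ascoli step, and your unnecessary worry about monotonizing in $x$, since the non-increasing hypothesis in Theorem \ref{thm_reg} concerns a $y$-variable that does not appear here), so your plan matches the published argument, whose barrier is the explicit piecewise function $\varepsilon_1$ on $\mathbb{T}_1$ and $\mathbb{T}_2$.
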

\begin{proof}
	We begin by modifying our given function $f$ as follows.
	
	For $k > 0, t \in \mathbb{T},$ define
	\begin{equation*}
	f_{k}(t,x) = \left\{\begin{array}{ll}
	f\left(t,|x|\right), & \text{if } |x| \geq \frac{1}{k} \\
	f\left(t,\frac{1}{k}\right), & \text{if } |x| < \frac{1}{k}.
	\end{array}\right.
	\end{equation*}
	Then, $f_{k}$ is continuous on $\mathbb{T} \times \mathbb{R}.$
	
	Assumption (\textbf{F}) implies that there exists a $k_{0}$ such that, for all $k \geq k_{0},$ $$f_{k}(t,0) = f\left(t,\frac{1}{k}\right) > 0, \quad \text{for all } t \in \mathbb{T}.$$
	
	We now consider the boundary value problem
	\begin{equation}\label{eq_k}
	u^{\Delta\Delta}(\rho(t)) + f_{k}(t, u(t)) = 0, \quad t \in \mathbb{T}^0,
	\end{equation}
	satisfying boundary conditions \eqref{eq}.
	
	Now, let $\alpha(t) = 0$ and $\beta(t) = c.$  Then, for each $k \geq k_{0},$ $\alpha$ and $\beta$ are lower and upper solutions of \eqref{eq_k} respectively.  Also, $\alpha(t) \leq \beta(t)$ for $t \in \mathbb{T}.$  Thus, by Theorem \ref{thm_reg}, for each $k \geq k_{0},$ there exists a solution $u_k$ to each problem \eqref{eq_k} that satisfies $0 \leq u_{k}(t) \leq c,$ for $t \in \mathbb{T}.$
	
	Let $\mathbb{T}_1 = \mathbb{T} \cap (0, T-\delta)$ and $\mathbb{T}_2 = \mathbb{T} \cap (T-\delta, T).$
	
	Now, for each $k \geq k_0,$ there exists $\varepsilon \in \left(0, \frac{c}{2}\right)$ such that if $k_{\varepsilon} \geq k_{0},$ then 
	\begin{equation}\label{fk}
		f_{k}(t, x) > c, \quad t \in \mathbb{T}, x \in (0, \varepsilon].
	\end{equation}
	
	For the sake of establishing a contradiction, assume that for $k \geq k_{\varepsilon} \geq k_0,$ we have that $u_{k_{\varepsilon}}(t) < \varepsilon_1,$ where 
	\begin{equation*}
		\varepsilon_1 = \left\{
		\begin{array}{ll}
			\varepsilon, & t \in \mathbb{T}_1, \\
			\frac{\varepsilon}{\delta}(T-t), & t \in \mathbb{T}_2.
		\end{array}
		\right.
	\end{equation*}
	
	Now,
	\begin{eqnarray*}
		-u_{k_{\varepsilon}}(t) & = &-g(T)- \int_{t}^{T} u_{k_{\varepsilon}}^{\Delta}(r) \Delta r \\
		& = & -g(T)- \int_{t}^{T} \int_{0}^{r} f_{k_{\varepsilon}}(s, u_{k_{\varepsilon}}(s)) \Delta s \Delta r \\
		& = & -g(T)- \int_{t}^{T-\delta} \int_{0}^{r} f_{k_{\varepsilon}}(s, u_{k_{\varepsilon}}(s)) \Delta s \Delta r \\
		&   & - \int_{T-\delta}^{T} \int_{0}^{r} f_{k_{\varepsilon}}(s, u_{k_{\varepsilon}}(s)) \Delta s \Delta r \\
		& < & - \int_{T-\delta}^{T} \int_{0}^{r} c \Delta s \Delta r \\
		& = & - \frac{c}{2} \left((T-\delta)^2 - t^2\right).
	\end{eqnarray*}
	
	First, consider $t \in \mathbb{T}_1.$  We have that $$u_{k_{\varepsilon}}(t) > \frac{c}{2} \left((T-\delta)^2 - t^2\right).$$  However, this implies that $u_{k_{\varepsilon}}^{\Delta\Delta}(\rho(t)) > -c,$ which leads to $f_{k_{\varepsilon}}(t, u_{k_{\varepsilon}}(t)) < c,$ a contradiction to \eqref{fk}.  Hence, for $t \in \mathbb{T}_1,$ we have $$u_{k_{\varepsilon}}(t) \geq \varepsilon_1 = \varepsilon.$$
	
	Now, consider $t \in \mathbb{T}_2.$  We have that $u_{k_{\varepsilon}} > \frac{c}{2} \left((T-\delta)^2 - t^2\right).$  This again implies that $u_{k_{\varepsilon}}^{\Delta\Delta}(\rho(t)) > -c.$  We also have, from assumption (\textbf{E}), that $$u_{k_{\varepsilon}}^{\Delta\Delta}(\rho(t)) = - f_{k_{\varepsilon}}(t, u_{k_{\varepsilon}}(t)) < 0.$$  Hence, $$-c < u_{k_{\varepsilon}}^{\Delta\Delta}(\rho(t)) < 0,$$ making $u_{k_{\varepsilon}}$ concave in this interval.  We also know, by continuity, that $u_{k_{\varepsilon}}(T-\delta) \geq \varepsilon$ and $u_{k_{\varepsilon}}(T) = 0.$  Therefore, by continuity, for $t \in \mathbb{T}_2,$ $$u_{k_{\varepsilon}}(t) > \varepsilon_1 = \frac{\varepsilon}{\delta} (T-t).$$
	
	Hence, $0 < \varepsilon_1 < u_{k_{\varepsilon}}(t)$ for $t \in \mathbb{T}^0.$
	Thus, for $k \geq k_{\varepsilon},$ we can choose a subsequence $\left\{u_{k_{n}}(t)\right\} \subseteq \left\{u_{k}(t)\right\}$ so that $$\lim\limits_{n \rightarrow \infty} u_{k_{n}}(t) = u(t), \quad t \in \mathbb{T},$$ and note that $u(t) \in E,$ where $E$ is defined as in the proof of Theorem \ref{thm_reg}.
     
    Moreover, for sufficiently large $n,$ $$u_{k_{n}} = g(T)+\int_{t}^{T} \int_{0}^{r} f(s, u_{k_{n}}(s)) \Delta s \Delta r.$$  And from the continuity of $f,$ as we let $n \rightarrow \infty,$ we get $$u(t) =g(T)\int_{t}^{T} \int_{0}^{r} f(s, u(s)) \Delta s \Delta r.$$  Hence, $$u^{\Delta\Delta}(\rho(t)) = -f(t, u(t)),$$ with the desired inequality satisfied, specifically, $0 < u(t) < c$ on $\mathbb{T}^k.$

\end{proof}

\section{Example}
Let $\mathbb{T}$ be as given following Definition \ref{timescale}. Let $\alpha \in [0, \infty)$, $c, \beta \in (0, \infty),$ and $a : \mathbb{T} \rightarrow \mathbb{R}.$  Then, by Theorem \ref{main}, the problem
\begin{tiny}
	\begin{equation*}
	u^{\Delta\Delta}(\rho(t)) + \left(a(t) + \left(u(t)\right)^\alpha + \left(u(t)\right)^{-\beta}\right)\left(c - u(t)\right) - \left(u^{\Delta}(\rho(t))\right)^3 = 0, \quad t \in \mathbb{T}^k,
	\end{equation*}
\end{tiny}
along with the boundary conditions \eqref{eq}, has a solution $u$ satisfying the desired inequality.  It is worth noting that although the function $f$ in problem \eqref{eq} does not depend on $u^{\Delta}(\rho(t))$ explicitly, $u^{\Delta}(\rho(t))$ is well defined on $\mathbb{T}^k$ and in many cases can be rewritten simply in terms of $u(t).$

%%%%%%%%%%%%%%%%%%%%%%%%%%%%%%%%%%%%%%%%%%%%%%%%%%%%%%%%%%%%%%%%%%%%%%%%%%%%%%%%%%%

%\bibliography{references}
%\bibliographystyle{siam}

\end{document}